\makeatletter \@namedef{subjclassname@2010}{
  \textup{2020} Mathematics Subject Classification}
\newtheorem{thm}{Theorem}[section]
\newtheorem{lem}{Lemma}[section]
\newtheorem{pro}{Proposition}[section]
\newtheorem{cor}{Corollary}[section]
\theoremstyle{remark}
\newtheorem*{rema}{Remark}
\newtheorem{exa}{\textbf{Example}}
\theoremstyle{definition}
\newcommand{\R}{\mathbb{R}}
\newcommand{\N}{\mathbb{N}}
\begin{document}

\title[]{The sandwich rule for sequences of self-adjoint operators and some applications}
\author[Ch. Chellali and M. H. Mortad]{Chérifa Chellali and Mohammed Hichem Mortad$^*$}

\thanks{$^*$ Corresponding author, who is partially supported by "Laboratoire d'Analyse Mathématique et Applications"}

\dedicatory{}
\thanks{}
\date{}
\keywords{Squeeze theorem; Sandwich rule; Sequences of self-adjoint
operators; Operator topologies}

\subjclass[2010]{Primary 47A63, Secondary 47B65, 47A99.}

\address{(The first author) The Higher School of Economics Oran, Algeria.}

\email{chchellali@gmail.com.}

\address{(The corresponding and second author) Laboratoire d'analyse mathématique et applications. Department of
Mathematics, University of Oran 1, Ahmed Ben Bella, B.P. 1524, El
Menouar, Oran 31000, Algeria.}

\email{mhmortad@gmail.com,  mortad.hichem@univ-oran1.dz.}

\begin{abstract}
In this paper, we mainly deal with sequences  of bounded linear
operators on Hilbert space. The main result is the so-called squeeze
theorem (or sandwich rule) for convergent sequences of self-adjoint
operators. We show that this theorem remains valid for all three
main topologies on $B(H)$.
\end{abstract}

\maketitle

\section{Introduction}

Let $H$ be a complex Hilbert space and let $B(H)$ be the algebra of
all bounded linear operators defined from $H$ into $H$.

We say that $A\in B(H)$ is positive, and we write $A\geq 0$, if
$\langle Ax,x\rangle\geq 0$ for all $x\in H$ (this implies that $A$
is self-adjoint, i.e. $A=A^*$). When $A,B\in B(H)$ are both
self-adjoint, then we write $A\geq B$ if $A-B\geq0$.

Recall that if $A$ is a positive operator, then it possesses one and
only one positive square root $B$, i.e. $A=B^2$. We write in this
case $B:=\sqrt A$. Since $A^*A$ is always positive, its unique
positive square root is called its modulus or its absolute value,
and it is designated by $|A|$ (see e.g. \cite{Boucif-Dehimi-Mortad}
and \cite{mortad-Abs-value-BD} for some of its properties). It is
known that

\[-|A|\leq A\leq |A|\]

for any self-adjoint operator $A\in B(H)$. These inequalities follow
from Reid's inequality. See e.g. Exercise 6.3.7 in
\cite{Mortad-Oper-TH-BOOK-WSPC} for a proof and
\cite{Dehimi-Mortad-REID} for a general version of this useful
inequality. Notice that if $A,B\in B(H)$ are self-adjoint, then
$|A|\leq B$ does entail $-B\leq A\leq B$, whilst the reverse
implication need not hold (see \cite{Mortad-counterexamples book OP
TH} for a counterexample). Details about the former claim may be
consulted on Page 144 of \cite{Mortad-Oper-TH-BOOK-WSPC}.

Recall now briefly the main three important types of convergence on
$B(H)$. Let $(A_n)$ be a sequence in $B(H)$.

\begin{enumerate}
  \item We say that $(A_n)$ converges in norm (or uniformly) to $A\in B(H)$ if
  \[\lim_{n\rightarrow\infty}\|A_n-A\|=0.\]
  \item We say that $(A_n)$ strongly converges to $A\in B(H)$ if
  \[\lim_{n\rightarrow\infty}\|(A_n-A)x\|=0\]
  for each $x\in H$.
  \item We say that $(A_n)$ weakly converges to $A\in B(H)$ if
  \[\lim_{n\rightarrow\infty}\langle A_nx,y\rangle=\langle Ax,y\rangle\]
  for each $x,y\in H$.
\end{enumerate}

\begin{rema}
In the case of a complex Hilbert space $H$ (which is our case), or
in the case $(A_n)$ is a sequence of self-adjoint operators (which
is still our case!) even when $H$ is an $\R$-Hilbert space, then
$(A_n)$ weakly converges to $A\in B(H)$ if
  \[\lim_{n\rightarrow\infty}\langle A_nx,x\rangle=\langle Ax,x\rangle\]
  for each $x\in H$ (see e.g. Page 25 in \cite{Kubrusly-book-operatopr-exercise-sol}).
\end{rema}

It is well known to readers that these types of convergence all
coincide when $\dim H<\infty$, and that the weak convergence is the
weakest of the three while the uniform convergence is the strongest
of the three.

Recall also that if $(A_n)$ and $(B_n)$ are two sequences in $B(H)$,
and $A_n$ converges strongly (resp. in norm) to $A$ and $B_n$
converges strongly (resp. in norm) to $B$, then $A_nB_n$ converges
strongly (resp. in norm) to $AB$. Also, if $A_n\geq0$ for all $n$,
and $A_n$ converges strongly (resp. in norm) to $A$, then
$\sqrt{A_n}$ converges strongly (resp. in norm) to $\sqrt{A}$ (see
e.g. Exercise 14 on Page 217 in \cite{RS1}).

Let us include a fairly simple proof (cf. the one on Page 86 in
\cite{Simon-OPER-TH-BOOK-2015}) of the previous result in the case
of the convergence in norm as it might not be very well known to
some readers. The key idea is to first show the following auxiliary
result:
\begin{lem}
If $B,C\in B(H)$ are positive, then
\[\|\sqrt B-\sqrt C\|\leq \sqrt{\|B-C\|}\]
\end{lem}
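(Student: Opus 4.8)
Write $\beta=\sqrt B$ and $\gamma=\sqrt C$, both positive, and set $D=\beta-\gamma$, which is self-adjoint. The plan is to exploit the norm formula $\|T\|=\sup_{\|x\|=1}|\langle Tx,x\rangle|$ valid for self-adjoint $T$, together with the elementary identity
\[
B-C=\beta^2-\gamma^2=\beta D+D\gamma,
\]
which transfers spectral information about $D$ into information about $B-C$. Since $D$ need not commute with $\beta$ or $\gamma$, simultaneous diagonalization is unavailable, so I would replace it by an approximate-eigenvector argument.

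First I would choose unit vectors $(x_n)$ with $|\langle Dx_n,x_n\rangle|\to\|D\|$; passing to a subsequence fixes the sign, and since the asserted inequality is unchanged under swapping $B$ and $C$ (which turns $D$ into $-D$), I may assume $\langle Dx_n,x_n\rangle\to d:=\|D\|$. If $d=0$ then $B=C$ and there is nothing to prove, so assume $d>0$. Next I would upgrade $(x_n)$ to an approximate eigenvector of $D$: from $\|Dx_n\|\le d$ and $\|Dx_n\|\ge|\langle Dx_n,x_n\rangle|\to d$ one gets $\|Dx_n\|\to d$, whence
\[
\|Dx_n-dx_n\|^2=\|Dx_n\|^2-2d\,\langle Dx_n,x_n\rangle+d^2\longrightarrow 0,
\]
that is, $Dx_n-dx_n\to 0$.

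Writing $Dx_n=dx_n+\delta_n$ with $\delta_n\to 0$, and using the identity above together with $\beta=\beta^*$, $\gamma=\gamma^*$ and the fact that $\langle\beta x_n,x_n\rangle$, $\langle\gamma x_n,x_n\rangle$ are real and nonnegative, I would compute
\[
\langle(B-C)x_n,x_n\rangle=\langle Dx_n,\beta x_n\rangle+\langle\gamma x_n,Dx_n\rangle=d\big(\langle\beta x_n,x_n\rangle+\langle\gamma x_n,x_n\rangle\big)+\varepsilon_n,
\]
where $|\varepsilon_n|\le(\|\beta\|+\|\gamma\|)\|\delta_n\|\to 0$. Since $\langle\beta x_n,x_n\rangle+\langle\gamma x_n,x_n\rangle=\langle Dx_n,x_n\rangle+2\langle\gamma x_n,x_n\rangle\ge\langle Dx_n,x_n\rangle$ and $d>0$, this yields $\langle(B-C)x_n,x_n\rangle\ge d\,\langle Dx_n,x_n\rangle+\varepsilon_n$. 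Finally $\langle(B-C)x_n,x_n\rangle\le\|B-C\|$ for every $n$, because $B-C$ is self-adjoint and $\|x_n\|=1$; letting $n\to\infty$ gives $d^2\le\|B-C\|$, i.e. $\|\sqrt B-\sqrt C\|\le\sqrt{\|B-C\|}$. The main obstacle is precisely the non-commutativity of $B$ and $C$, which is handled by the approximate-eigenvector passage; the only real care needed is to verify that the error terms $\delta_n$ and $\varepsilon_n$ genuinely vanish in the limit.
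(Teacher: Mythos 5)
Your proof is correct, but it follows a genuinely different route from the paper's. The paper's argument is order-theoretic: from $B\leq C+\|B-C\|I$ it passes to square roots using the operator monotonicity of $t\mapsto\sqrt{t}$ and the estimate $\sqrt{C+\lambda I}\leq \sqrt C+\sqrt\lambda\, I$, obtains $\pm(\sqrt B-\sqrt C)\leq\sqrt{\|B-C\|}\,I$, and concludes via the norm characterization of self-adjoint operators. You instead avoid operator monotonicity altogether: you use the algebraic identity $B-C=\beta D+D\gamma$ with $D=\sqrt B-\sqrt C$, build an approximate eigenvector sequence $(x_n)$ for $D$ at the spectral value $d=\|D\|$ (after the legitimate reduction, by the $B\leftrightarrow C$ symmetry of the statement, to the case $\langle Dx_n,x_n\rangle\to +\|D\|$), and then exploit positivity of $\gamma$ to get $\langle(B-C)x_n,x_n\rangle\geq d\langle Dx_n,x_n\rangle+\varepsilon_n$ with $\varepsilon_n\to 0$, whence $d^2\leq\|B-C\|$. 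I checked the details: the passage from $\|Dx_n\|\to d$ and $\langle Dx_n,x_n\rangle\to d$ to $Dx_n-dx_n\to 0$ is sound, the error term is controlled by $(\|\beta\|+\|\gamma\|)\|\delta_n\|$, and the inequality $\langle\beta x_n,x_n\rangle+\langle\gamma x_n,x_n\rangle\geq\langle Dx_n,x_n\rangle$ is exactly $2\langle\gamma x_n,x_n\rangle\geq 0$. What each approach buys: the paper's proof is shorter once one grants the (nontrivial) monotonicity of the square root, and it localizes all the work in that cited fact; yours is longer but essentially self-contained, using only Cauchy--Schwarz, the norm formula $\|T\|=\sup_{\|x\|=1}|\langle Tx,x\rangle|$ for self-adjoint $T$, and positivity of the square roots, so it would survive in a setting where one does not wish to invoke L\"owner-type monotonicity results.
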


\begin{proof}
Clearly, $B-C$ is self-adjoint and so (cf. Exercise 5.3.11 in
\cite{Mortad-Oper-TH-BOOK-WSPC})
\[B-C\leq \|B-C\|I\]
or $B\leq C+\|B-C\|I$. Upon passing to the positive square root, we
obtain
\[\sqrt B\leq \sqrt{C+\|B-C\|I}\leq \sqrt C+\sqrt{\|B-C\|I}\]
(also, by a glance at Exercise 5.3.31 in
\cite{Mortad-Oper-TH-BOOK-WSPC}). Therefore,
\[\sqrt B-\sqrt C\leq\sqrt{\|B-C\|}I.\]
By inverting the roles of $B$ and $C$, we obtain
\[\sqrt C-\sqrt B\leq\sqrt{\|B-C\|}I,\]
and hence we get the desired inequality (using again Exercise 5.3.11
in \cite{Mortad-Oper-TH-BOOK-WSPC}).
\end{proof}

\begin{cor}
If $A_n\geq0$ for all $n$, and $A_n$ converges in norm to $A$, then
$\sqrt{A_n}$ converges in norm to $\sqrt{A}$.
\end{cor}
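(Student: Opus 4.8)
The plan is to deduce the corollary directly from the preceding lemma, with essentially no extra work. First I would check that the limit operator $A$ is itself positive, so that $\sqrt A$ makes sense. For any $x\in H$ we have
\[|\langle A_nx,x\rangle-\langle Ax,x\rangle|\leq\|A_n-A\|\,\|x\|^2\to 0,\]
so that $\langle Ax,x\rangle=\lim_{n\to\infty}\langle A_nx,x\rangle\geq 0$, since each $A_n\geq 0$. Hence $A\geq 0$ and $\sqrt A$ is well defined.

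Next, I would apply the lemma with $B=A_n$ and $C=A$, which are both positive, to obtain
\[\|\sqrt{A_n}-\sqrt A\|\leq\sqrt{\|A_n-A\|}\]
for every $n$. Letting $n\to\infty$, the hypothesis $\|A_n-A\|\to 0$ together with the continuity of $t\mapsto\sqrt t$ at $0$ forces $\|\sqrt{A_n}-\sqrt A\|\to 0$, which is exactly the assertion.

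I do not expect any genuine obstacle here, since the lemma already carries the analytic content. The only step requiring a word of justification is the positivity of the limit $A$, which rests on the fact that the condition $\langle\,\cdot\,x,x\rangle\geq 0$ is preserved under norm convergence (indeed even under weak convergence); once that is noted, the estimate from the lemma finishes the proof immediately.
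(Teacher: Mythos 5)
Your proof is correct and follows exactly the route the paper intends: the lemma is stated precisely so that the corollary follows from the estimate $\|\sqrt{A_n}-\sqrt A\|\leq\sqrt{\|A_n-A\|}$, and your preliminary check that the limit $A$ is positive (so that $\sqrt A$ is defined) is a sensible addition the paper leaves implicit.
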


More properties may be consulted in \cite{halmos-book-1982} or
\cite{Kubrusly-book-operatopr-exercise-sol}. Readers should also
consult \cite{Kubrusly-2011-COURS+EXO-FAT-BOOK},
\cite{Mortad-Oper-TH-BOOK-WSPC}, \cite{Mortad-counterexamples book
OP TH}, and \cite{Weidmann} for any other results which will be used
here, and for further reading.

\section{A Squeeze (Sandwich) Rule for Sequences of Self-adjoint Bounded Linear Operators}

The first result is quite simple.

\begin{pro}
Let $(A_n)$, $(B_n)$ and $(C_n)$ be three sequences of self-adjoint
operators in $B(H)$ such that $C_n\leq A_n\leq B_n$ for all
$n\in\N$. If $(B_n)$ and $(C_n)$ converge weakly to some $L\in
B(H)$, then $(A_n)$ too converges weakly to $L\in B(H)$.
\end{pro}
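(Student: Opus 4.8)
The plan is to deflate the operator statement to a purely scalar one by testing against diagonal vectors, and then to invoke the classical squeeze theorem for sequences of real numbers together with the Remark on weak convergence recorded in the Introduction.

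First I would fix an arbitrary $x\in H$. The hypothesis $C_n\leq A_n\leq B_n$ means, by the very definition of the order on self-adjoint operators, that $A_n-C_n\geq 0$ and $B_n-A_n\geq 0$; evaluating the associated quadratic forms at $x$ yields the chain of real inequalities
\[
\langle C_nx,x\rangle\leq\langle A_nx,x\rangle\leq\langle B_nx,x\rangle
\]
for every $n\in\N$ (all three quantities are real since the operators are self-adjoint). Since $(B_n)$ and $(C_n)$ converge weakly to $L$, and all operators involved are self-adjoint, the Remark in the Introduction gives $\langle C_nx,x\rangle\to\langle Lx,x\rangle$ and $\langle B_nx,x\rangle\to\langle Lx,x\rangle$ as $n\to\infty$. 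Applying the ordinary squeeze theorem to the real sequences displayed above, we obtain $\langle A_nx,x\rangle\to\langle Lx,x\rangle$.

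Because $x\in H$ was arbitrary, this shows $\langle A_nx,x\rangle\to\langle Lx,x\rangle$ for all $x\in H$; invoking the Remark once more — this time in the direction that, for a sequence of self-adjoint operators (and in any case on a complex Hilbert space), convergence of the diagonal quadratic forms is equivalent to weak convergence — we conclude that $(A_n)$ converges weakly to $L\in B(H)$, as desired.

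I do not expect any genuine obstacle: the whole content is the passage from operator inequalities to scalar inequalities via quadratic forms, which is immediate from the definition of positivity, and the remainder is just the scalar squeeze theorem. The only point meriting a word of care is that weak convergence of operators is a priori phrased with two vectors, $\langle A_nx,y\rangle$, rather than one; this is precisely why the Remark (applicable here since $H$ is complex and, at all events, since the $A_n$ are self-adjoint) is invoked to close the argument.
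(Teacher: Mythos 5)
Your proof is correct and follows essentially the same route as the paper: reduce the operator inequalities to the scalar inequalities $\langle C_nx,x\rangle\leq\langle A_nx,x\rangle\leq\langle B_nx,x\rangle$, apply the classical squeeze theorem, and use the fact that for self-adjoint operators weak convergence is detected by the diagonal quadratic forms. Your version is slightly more explicit than the paper's about why testing only against $\langle\cdot\, x,x\rangle$ suffices, but the argument is the same.
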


\begin{proof}
\textit{Ex hypothesi}, $C_n\leq A_n\leq B_n$, and so
\[\langle C_nx,x\rangle \leq \langle A_nx,x\rangle\leq \langle B_nx,x\rangle\]
for all $n$ (and all $x\in H$). Since $\langle C_nx,x\rangle$,
$\langle A_nx,x\rangle$ and $\langle B_nx,x\rangle$ are all real
sequences, it ensues by the basic squeeze theorem that $(A_n)$ too
converges weakly to $L\in B(H)$.
\end{proof}

It is therefore more interesting to study the sandwich rule in the
case of strong and uniform convergences.

\begin{thm}\label{sandwich rule S.A. oper THM}
Let $(A_n)$, $(B_n)$ and $(C_n)$ be three sequences of self-adjoint
operators in $B(H)$ such that $C_n\leq A_n\leq B_n$ for all
$n\in\N$. If $(B_n)$ and $(C_n)$ converge in norm (resp. strongly)
to some $L\in B(H)$, then $(A_n)$ too converges in norm (resp.
strongly) to $L\in B(H)$.
\end{thm}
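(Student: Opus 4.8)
The plan is to reduce the whole statement to a single normalized claim: if $(D_n)$ is a sequence of \emph{positive} operators with $0\leq D_n\leq E_n$ for all $n$, and $(E_n)$ converges to $0$ in norm (resp. strongly), then $(D_n)$ converges to $0$ in norm (resp. strongly). To see that this suffices, set $D_n:=A_n-C_n$ and $E_n:=B_n-C_n$. The assumption $C_n\leq A_n\leq B_n$ immediately gives $0\leq D_n\leq E_n$, while $B_n\to L$ and $C_n\to L$ in norm (resp. strongly) force $E_n=B_n-C_n\to 0$ in norm (resp. strongly), using the fact that sums and differences behave well under these convergences. Once the normalized claim yields $D_n\to 0$, we recover $A_n=C_n+D_n\to L+0=L$ in the appropriate topology.

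For the norm case the normalized claim is a one-line monotonicity estimate: when $0\leq S\leq T$ are positive operators, $\|S\|=\sup_{\|x\|=1}\langle Sx,x\rangle\leq\sup_{\|x\|=1}\langle Tx,x\rangle=\|T\|$. Hence $\|D_n\|\leq\|E_n\|\to 0$, and we are done.

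The strong case is the substantial one. First, $(E_n)$ is strongly convergent, hence norm bounded by the Banach--Steinhaus theorem, so $M:=\sup_n\|E_n\|<\infty$; by the monotonicity estimate above, $\sup_n\|D_n\|\leq M$ and therefore $\sup_n\|\sqrt{D_n}\|=\sup_n\sqrt{\|D_n\|}\leq\sqrt M$. Now fix $x\in H$. Since $\sqrt{D_n}$ is self-adjoint,
\[\|\sqrt{D_n}\,x\|^2=\langle\sqrt{D_n}\,x,\sqrt{D_n}\,x\rangle=\langle D_nx,x\rangle\leq\langle E_nx,x\rangle,\]
and the right-hand side tends to $0$ because $E_n\to 0$ strongly, hence weakly. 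Thus $\sqrt{D_n}\,x\to 0$ in $H$, and consequently
\[\|D_nx\|=\|\sqrt{D_n}\,(\sqrt{D_n}\,x)\|\leq\|\sqrt{D_n}\|\cdot\|\sqrt{D_n}\,x\|\leq\sqrt M\,\|\sqrt{D_n}\,x\|\longrightarrow 0,\]
so $D_n\to 0$ strongly, which completes the reduction.

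The step I expect to be the main obstacle is exactly the passage, in the strong case, from the order bound $0\leq D_n\leq E_n$ — which only controls the quadratic form $\langle D_nx,x\rangle$ — to control of the vector norm $\|D_nx\|$, since $0\le D_n\le E_n$ does \emph{not} give $\|D_nx\|\le\|E_nx\|$. Factoring $D_n$ through its positive square root, combined with the uniform bound furnished by Banach--Steinhaus, is the device that bridges this gap; the norm case needs none of this machinery.
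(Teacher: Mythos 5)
Your proof is correct, and it takes a genuinely different route from the paper's, although both hinge on the same central device: for a positive operator $T$ one has $\langle Tx,x\rangle=\|\sqrt{T}\,x\|^2$, which converts quadratic-form control into vector-norm control. The differences are in the reduction and in the final step. The paper first normalizes to $L=0$, then treats the sub-case $C_n\geq 0$, then handles an arbitrary lower bound by adding $|C_n|$ to all three sequences (which requires knowing that $C_n\to 0$ strongly forces $|C_n|\to 0$ strongly); your single substitution $D_n=A_n-C_n$, $E_n=B_n-C_n$ collapses all of this into one step and makes the lower bound identically zero, so the absolute-value detour disappears. In the strong case the paper then invokes the nontrivial external fact that $T\mapsto\sqrt{T}$ is strongly continuous on positive operators (applied to both bounding sequences) and squeezes $\|\sqrt{A_n}x\|$, concluding via the product rule for strongly convergent sequences; you avoid that cited result entirely, obtaining $\sqrt{D_n}\,x\to 0$ directly from $\|\sqrt{D_n}\,x\|^2=\langle D_nx,x\rangle\leq\langle E_nx,x\rangle$ and then upgrading to $D_nx\to 0$ via the Banach--Steinhaus bound and the $C^*$-identity $\|\sqrt{D_n}\|=\sqrt{\|D_n\|}$. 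Your argument is therefore more self-contained and makes explicit the uniform-boundedness ingredient that the paper leaves implicit in its appeal to the product rule; the paper's version is shorter granted the quoted lemma on square roots. The norm cases are essentially identical in both treatments, resting on $\|T\|=\sup_{\|x\|=1}\langle Tx,x\rangle$ for positive $T$. You also correctly isolate the genuine obstacle, namely that $0\leq D_n\leq E_n$ does not yield $\|D_nx\|\leq\|E_nx\|$, which is exactly why the square-root factorization is needed.
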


\begin{proof}The proof is split into several parts.
\begin{enumerate}
  \item \textit{The case $L=0$ and $C_n\geq0$ for all $n$:} Assume that $C_n\geq0$ for all
  $n$ and that $L=0$. Hence
  \begin{equation}\label{INEQ 1}
  0\leq \langle C_nx,x\rangle \leq \langle A_nx,x\rangle\leq \langle
  B_nx,x\rangle\end{equation}
  for all $n$ and all $x\in H$. By invoking positive square roots as well as their self-adjointness, it ensues that
  \[\|\sqrt{C_n}x\|^2\leq \|\sqrt{A_n}x\|^2\leq \|\sqrt{B_n}x\|^2\]
  for all $x\in H$ and all $n$ in $\N$. Since $(B_n)$ and $(C_n)$ converge strongly
to $0\in B(H)$, $(\sqrt{B_n})$ and $(\sqrt{C_n})$ too converge
strongly to $0$. Therefore, $(\sqrt{A_n})$
  converges strongly to 0, whereby  $(A_n)$ converges
  strongly to 0. This settles the question of strong convergence.

  To deal with the uniform convergence, consider again Inequalities \ref{INEQ 1}, then
  pass to the supremum over the unit sphere. In other words,
  \[0\leq \sup_{\|x\|=1}\langle C_nx,x\rangle \leq \sup_{\|x\|=1}\langle A_nx,x\rangle\leq \sup_{\|x\|=1}\langle
  B_nx,x\rangle,\]
  that is,
  \[\|C_n\|\leq \|A_n\|\leq \|B_n\|\]
  for each $n\in\N$. It is, therefore, patent that if $(B_n)$ and $(C_n)$ converge in norm
to 0, then so does the sequence $(A_n)$.
  \item \textit{The case $L=0$ with an arbitrary $C_n$:} Consider
  the sequence of absolute values of $(C_n)$, i.e. $|C_n|$. Then, as
  alluded above
  \[C_n+|C_n|\geq 0\]
  for every $n\in\N$. Hence
  \[0\leq C_n+|C_n|\leq A_n+|C_n|\leq B_n+|C_n|\] for all
$n\in\N$. Now, remember that if $C_n$ converges to 0 strongly, then
so does $|C_n|$. In other terms, we have gone back to the previous
case of the proof. Whence, $A_n+|C_n|$ converges to 0, thereby $A_n$
converges strongly to 0, as needed. A similar argument applies in
the case of uniform convergence.
  \item \textit{The general case:} Clearly,
  \[C_n-L\leq A_n-L\leq B_n-L\]
  for all $n\in\N$. Since by hypothesis $(C_n-L)$ and $(B_n-L)$
  converge uniformly (resp. strongly) to $0\in B(H)$, it follows by
  the foregoing part of the proof that $(A_n-L)$ too converges
  uniformly (resp. strongly) to $0\in B(H)$. Accordingly, $(A_n)$ converges
  uniformly (resp. strongly) to $L\in B(H)$, and this marks the end of the
  proof.
\end{enumerate}
\end{proof}

The following result is a consequence of the above theorem.

\begin{cor}\label{poopqsfgfgalmzre21451899lmgmgmgmgmgmgm}
Let $(A_n)$ be a sequence of self-adjoint operators. Then $A_n\to 0$
weakly (resp. in norm, strongly) if $|A_n|\to 0$ weakly (resp. in
norm, strongly).
\end{cor}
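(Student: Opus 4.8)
The plan is to feed the elementary two-sided bound $-|A_n|\le A_n\le |A_n|$, valid for every self-adjoint $A_n\in B(H)$ and recalled in the Introduction, into the sandwich rule. Concretely, I would set $B_n:=|A_n|$ and $C_n:=-|A_n|$. Each $|A_n|$ is positive, hence self-adjoint, so $B_n$ and $C_n=-B_n$ are self-adjoint, and $C_n\le A_n\le B_n$ for all $n\in\N$.

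Now suppose $|A_n|\to 0$ in norm (resp. strongly, weakly). Then $B_n=|A_n|\to 0$ in the same topology, and since scalar multiplication (here by $-1$) is continuous for the uniform, strong and weak topologies alike, $C_n=-|A_n|\to 0$ as well. For the norm and strong cases, Theorem \ref{sandwich rule S.A. oper THM} applied with $L=0$ then gives $A_n\to 0$ in norm, resp. strongly. For the weak case, the same conclusion follows from the Proposition at the beginning of this section (the weak sandwich rule), again with $L=0$.

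I do not foresee a genuine obstacle here: the whole argument reduces to the observation that $|A_n|$ and $-|A_n|$ squeeze $A_n$ and converge to the common limit $0$. The only points deserving a line are that $|A_n|$ is self-adjoint, so the hypotheses of the sandwich results are met, and that $-|A_n|\to 0$ whenever $|A_n|\to 0$, which is immediate in each of the three topologies. If one wished, the norm case could alternatively be settled on the spot via $\|A_n\|=\||A_n|\|$, but routing all three cases through the sandwich rule handles them uniformly.
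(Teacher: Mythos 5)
Your proposal is correct and follows exactly the paper's own argument: the paper likewise writes $-|A_n|\le A_n\le |A_n|$ and invokes the sandwich rule with limit $0$, merely more tersely. Your additional remarks (self-adjointness of $|A_n|$, continuity of multiplication by $-1$) are harmless elaborations of the same route.
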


\begin{proof}
Write
\[-|A_n|\leq A_n\leq |A_n|\]
for all $n$. So, if $|A_n|\to 0$, then so is $A_n$.
\end{proof}

A simple consequence is derived from the previous theorem. This
generalizes a well known result about sequences of real numbers
(where the commutativity is guaranteed in that context). Besides,
this result has a certain interest as regards the map $(A,B)\mapsto
AB$.

\begin{pro}
Let $(A_n)$ be a sequence of self-adjoint operators, and let $(B_n)$
 be a sequence of positive operators. If $|A_n|\leq M$ for some positive $M\in B(H)$ and
all $n\in \N$,
 $B_n\to 0$ weakly (resp. strongly, uniformly), $A_nB_n=B_nA_n$ and $B_nM=MB_n$ for all $n$, then $A_nB_n\to
0$ weakly (resp. strongly, uniformly).
\end{pro}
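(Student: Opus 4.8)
The plan is to realize $A_nB_n$ as being squeezed between two sequences that manifestly tend to $0$, and then to quote the sandwich rules already at our disposal: the first proposition of this section for the weak case, and Theorem~\ref{sandwich rule S.A. oper THM} for the strong and uniform cases. The starting point is the hypothesis $|A_n|\leq M$, which, as recalled in the introduction, forces $-M\leq A_n\leq M$, that is, both $M-A_n\geq0$ and $M+A_n\geq0$.

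The heart of the argument --- and the step I expect to require genuine care --- is to multiply these two operator inequalities by $B_n$ without destroying positivity. A positive operator need not send the cone of positive operators into itself, but it does so along any positive operator with which it commutes. Concretely, since $B_n\geq0$ commutes with $M$ and with $A_n$, it commutes with $M\mp A_n$; and $\sqrt{B_n}$, being a norm limit of polynomials in $B_n$, commutes with $M\mp A_n$ as well, so that
\[B_n(M\mp A_n)=\sqrt{B_n}\,(M\mp A_n)\,\sqrt{B_n}\geq0.\]
(Here I use the existence and continuity of the positive square root, both recalled in the introduction.) Rewriting and invoking the commutation relations $B_nA_n=A_nB_n$ and $B_nM=MB_n$, this says precisely
\[-MB_n\leq A_nB_n\leq MB_n.\]

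Two final observations complete the scheme. First, each of $MB_n$, $A_nB_n$ and $-MB_n$ is self-adjoint: $(MB_n)^*=B_nM=MB_n$ and $(A_nB_n)^*=B_nA_n=A_nB_n$, again thanks to the commutativity hypotheses --- which is exactly why those hypotheses are imposed, since otherwise the displayed chain of inequalities would not even be meaningful. Second, because $B_n\to0$ weakly (resp. strongly, uniformly) and $M$ is a fixed bounded operator, both $\pm MB_n$ converge to $0$ in the same topology. Feeding the chain $-MB_n\leq A_nB_n\leq MB_n$ into the weak sandwich proposition (weak case) or into Theorem~\ref{sandwich rule S.A. oper THM} (strong and uniform cases) yields $A_nB_n\to0$ in the required sense, which finishes the proof. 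The only point that is not pure bookkeeping is the positivity of $B_n(M\mp A_n)$, and it rests squarely on the commutation assumptions.
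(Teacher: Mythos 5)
Your proof is correct and follows essentially the same route as the paper: deduce $-M\leq A_n\leq M$ from $|A_n|\leq M$, multiply by $B_n$ using the commutation hypotheses to obtain $-MB_n\leq A_nB_n\leq MB_n$, and invoke the sandwich results. The only difference is that you carefully justify the positivity of $B_n(M\mp A_n)$ via the factorization $\sqrt{B_n}(M\mp A_n)\sqrt{B_n}$, a step the paper passes over with ``it is seen that''; your added detail is accurate and welcome.
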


\begin{proof}Since $|A_n|\leq M$ for all $n\in \N$, it ensues that
$-M\leq A_n\leq M$. Since $A_nB_n=B_nA_n$, $B_n\geq0$, and
$B_nM=MB_n$ for all $n$, it is seen that
\[-MB_n\leq A_nB_n\leq MB_n,\]
still for all $n\in\N$. Because $(-MB_n)$ and $(MB_n)$ both go to
the zero operator in either of the three topologies, Theorem
\ref{sandwich rule S.A. oper THM} now intervenes and yields
$A_nB_n\to 0$ with respect to each of the three topologies, and the
proof is therefore complete.
\end{proof}

\begin{cor}\label{OMANNNN}
Let $(A_n)$ be a sequence of self-adjoint operators, and let $(B_n)$
be a sequence of positive operators. If $|A_n|\leq \alpha I$ for
some $\alpha\geq 0$ and all $n\in \N$,
 $B_n\to 0$ weakly (resp. strongly, uniformly) and $A_nB_n=B_nA_n$ for all $n$, then $A_nB_n\to
0$ weakly (resp. strongly, uniformly).
\end{cor}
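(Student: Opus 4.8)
The plan is to recognise Corollary \ref{OMANNNN} as the particular case of the preceding proposition obtained by taking $M=\alpha I$. First I would note that $\alpha I$ is a positive operator as soon as $\alpha\geq0$, so that the hypothesis $|A_n|\leq\alpha I$ is exactly the hypothesis $|A_n|\leq M$ of the proposition for this choice of $M$. Next, since $\alpha I$ is a scalar multiple of the identity, it commutes with every bounded operator; in particular $B_n(\alpha I)=(\alpha I)B_n$ for all $n$, so the commutation condition $B_nM=MB_n$ holds automatically. The remaining hypothesis $A_nB_n=B_nA_n$ is assumed outright. Thus every hypothesis of the preceding proposition is met, and its conclusion delivers $A_nB_n\to0$ weakly (resp. strongly, uniformly), which is precisely the assertion.

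Alternatively, one can give a direct argument appealing to Theorem \ref{sandwich rule S.A. oper THM} without passing through the proposition. From $|A_n|\leq\alpha I$ one gets $-\alpha I\leq A_n\leq\alpha I$, i.e.\ both $\alpha I-A_n\geq0$ and $\alpha I+A_n\geq0$. Since $B_n\geq0$ commutes with $A_n$ (and with $I$), it commutes with $\alpha I\pm A_n$, and the product of two commuting positive operators is positive; hence $(\alpha I-A_n)B_n\geq0$ and $(\alpha I+A_n)B_n\geq0$, which rearrange to
\[-\alpha B_n\leq A_nB_n\leq\alpha B_n\]
for all $n$. Here $A_nB_n$ is self-adjoint because $(A_nB_n)^*=B_n^*A_n^*=B_nA_n=A_nB_n$, so these are genuine inequalities between self-adjoint operators. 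As $(-\alpha B_n)$ and $(\alpha B_n)$ both converge to $0$ in whichever of the three topologies $B_n\to0$, Theorem \ref{sandwich rule S.A. oper THM} (in the norm and strong cases) together with the weak sandwich result proved earlier (in the weak case) yields $A_nB_n\to0$ in that same topology.

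There is essentially no genuine obstacle: the statement is a bona fide corollary. The only points that deserve a moment's care are the standard fact that the product of two commuting positive bounded operators is again positive, and the verification that the three operators $-\alpha B_n$, $A_nB_n$, $\alpha B_n$ appearing in the sandwich are all self-adjoint, so that Theorem \ref{sandwich rule S.A. oper THM} applies as stated.
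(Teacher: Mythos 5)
Your proof is correct and follows the paper's intended route: the paper states this corollary without a separate proof precisely because it is the case $M=\alpha I$ of the preceding proposition, which is exactly your first argument. Your alternative direct argument simply unfolds the proposition's own proof (reducing to $-\alpha B_n\leq A_nB_n\leq \alpha B_n$ and invoking the sandwich rule), so it is the same idea in expanded form.
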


The following corollary shows the weak continuity of $(A,B)\mapsto
AB$ at one point (namely 0) when restricted to the set of positive
self-adjoint operators (in particular, $A\mapsto A^2$ is weakly
continuous when restricted to positive self-adjoint operators).

\begin{cor}\label{hichemmmmmmmm54458778kmdfgklmmlklmkfghùml}Let $(A_n)$ and $(B_n)$ be two sequences of positive operators. If $A_n,B_n\to 0$ weakly, and
$A_nB_n=B_nA_n$ for all $n$, then $A_nB_n\to 0$ weakly.
\end{cor}

\begin{proof}
Since $A_n\to 0$ weakly, the real sequence $\langle A_nx,x\rangle$
goes to 0. A simple argument allows us to write $\langle
A_nx,x\rangle\leq \alpha\langle x,x\rangle$ for some $\alpha\geq0$,
all $n\in\N$ and all $x\in H$. In other words, $0\leq A_n\leq \alpha
I$. Now, Corollary \ref{OMANNNN} gives the desired conclusion.
\end{proof}

\section{Counterexamples}

We give a couple of counterexamples. The first one shows the failure
of the conclusion in Corollary
\ref{hichemmmmmmmm54458778kmdfgklmmlklmkfghùml} when the positivity
is dropped.

\begin{exa}Let $S$ be the usual shift operator defined on
$\ell^2(\N)$, then set $A_n=S^n+(S^n)^*$. Each $A_n$ is self-adjoint
and $A_n\to 0$ weakly for it is well-known that $S^n$ converges to
$0$ weakly, as does $(S^n)^*$. Hence $A_n\to 0$ weakly. Since $S$ is
hyponormal, so is $S^n$ (recall that $T\in B(H)$ is hyponormal when
$TT^*\leq T^*T$). Therefore,
\[|A_n|\leq 2I\]
for all $n$ by say Exercise 12.3.11 in
\cite{Mortad-Oper-TH-BOOK-WSPC}.

Now, set $B_n=A_n$, and so $B_n\to 0$ weakly. Clearly
\[A_nB_n=A_n^2=(S^n)^2+[(S^n)^*]^2+I+S^n(S^n)^*.\]

Since it is easy to see that $(S^n)^2,[(S^n)^*]^2\to 0$ weakly, it
follows that $A_n^2\not\to0$ as otherwise $S^n(S^n)^*\to -I$, and
this is impossible because $S^n(S^n)^*$ is a sequence of positive
operators and so it cannot have a non-positive weak limit.
\end{exa}

\begin{rema}
This example may also be used to show the weak discontinuity of
$A\mapsto A^2$ over $B(H)$. It is simpler than the one which
appeared in Problem 114 in \cite{halmos-book-1982}. Indeed, by the
example above, $A_n\to 0$ weakly whilst $A_n^2\not\to 0$ weakly.
\end{rema}

Now, we give the second counterexample. It shows that the converse
of Corollary \ref{poopqsfgfgalmzre21451899lmgmgmgmgmgmgm} need not
be true. In general, if $A_n\to 0$ weakly, then $|A_n|$ does not
have to go 0 weakly. For example, consider $A_n=S^n$ where $S$ is
the shift operator on $\ell^2(\N)$. Then $A_n\to 0$ weakly but
$|A_n|=I$ for all $n$, and so $|A_n|\not\to 0$ weakly. A
counterexample is therefore more interesting in the class of
sequences of self-adjoint operators, and this is seen next.

\begin{exa}Let $S$ be the usual shift operator defined on
$\ell^2(\N)$, then set $A_n=S^n+(S^n)^*$. Each $A_n$ is self-adjoint
and $A_n\to 0$ weakly. We claim that $|A_n|$ does not converge
weakly to 0. For the sake of contradiction, suppose $|A_n|\to 0$. By
Corollary \ref{hichemmmmmmmm54458778kmdfgklmmlklmkfghùml} (by taking
$A_n=B_n$ for all $n$), it ensues that $|A_n|^2\to 0$ weakly.
However, we already know from the above example that
\[|A_n|^2=A_n^2=(S^n)^2+[(S^n)^*]^2+I+S^n(S^n)^*\not\longrightarrow 0\]
weakly.

\end{exa}

\bibliographystyle{amsplain}

\end{document}